\newtheorem{theorem}{Theorem}[section]
\newtheorem{thmx}{Theorem}
\newtheorem{lemma}[theorem]{Lemma}
\newtheorem*{theorem*}{Theorem}
\theoremstyle{definition}
\newtheorem{definition}[theorem]{Definition}
\newtheorem{theorem-definition}[theorem]{Theorem-Definition}
\theoremstyle{remark}
\newtheorem{example}[theorem]{Example}
\def\@endtheorem{\endtrivlist}
\numberwithin{equation}{section}
\begin{document}
\title[Vanishing on toric surfaces]{Vanishing on toric surfaces}
\author[Yuan Wang]{Yuan Wang}
\author[Fei Xie]{Fei Xie}
\subjclass[2010]{ 
14E30, 14F17, 14J26, 14M25.
}
\keywords{
Kodaira-type vanishing, Kawamata-Viehweg vanishing, minimal model program, over arbitrary field, toric surface.
}
\address{Department of Mathematics, University of Utah, 155 South 1400 East, Salt Lake City, UT 84112-0090, USA}
\email{ywang@math.utah.edu}
\address{Department of Mathematics, University of California at Los Angeles, Los Angeles, CA 90095-1555, USA}
\email{feixie@math.ucla.edu}
\thanks{The first author was supported in part by the NSF research grant DMS-\#1300750 and the Simons Foundation Award \#256202. The second author was supported in part by the NSF research grant DMS-\#1160206.}
\begin{abstract}
We prove the Kawamata-Viehweg vanishing and another Kodaira-type vanishing for projective toric surfaces over arbitrary fields.
\end{abstract}
\maketitle
\section{Introduction}
It is well known that Kodaira vanishing does not hold in positive characteristic, even for surfaces (cf. \cite{Raynaud78}, \cite{Mukai13}). So it is natural to ask to what extent Kodaira-type vanishing holds. Recently some interesting results have been established on rational surfaces. It is shown by Cascini, Tanaka and Witaszek that Kawamata-Viehweg vanishing holds for log del Pezzo surfaces if the base field is algebraically closed of sufficiently large characteristic (cf. \cite[Theorem 1.2]{CTW16}). However, Cascini and Tanaka have shown that Kawamata-Viehweg vanishing does not hold for rational surfaces defined over any algebraically closed field of positive characteristic (cf. \cite[Theorem 1.1]{CT16}).

In this short paper we prove two Kodaira-type vanishing theorems for projective toric surfaces over arbitrary fields, thus filling in another piece of the puzzle. The first result confirms that Kawamata-Viehweg vanishing holds for projective toric surfaces. 
\begin{thmx} \label{main}
Let $(X,\Delta)$ be a klt pair where $X$ is a projective toric surface. If $D$ is a $\mathbb{Z}$-divisor on $X$ such that $D-(K_X+\Delta)$ is nef and big, then $H^i(X,\mathcal{O}_X(D))=0$ for any $i>0$.
\end{thmx}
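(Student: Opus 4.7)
The plan is to rewrite $D$ in the form $K_X+\lceil L\rceil$ for a suitable nef and big $\mathbb{Q}$-Cartier $\mathbb{Q}$-divisor $L$, and then to invoke the Kawamata--Viehweg-type vanishing already available for simplicial projective toric varieties in arbitrary characteristic.

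First I would reduce to the case of an algebraically closed base field. By flat base change, $H^i(X,\mathcal{O}_X(D))\otimes_k \bar k\cong H^i(X_{\bar k},\mathcal{O}_{X_{\bar k}}(D_{\bar k}))$, and both the klt hypothesis on $(X,\Delta)$ and the nef-and-big property of $D-(K_X+\Delta)$ are preserved under the faithfully flat base change to $\bar k$ (discrepancies being computed on a log resolution which base changes, and nefness and volume being preserved by finite field extensions). I therefore assume from now on that the ground field is algebraically closed.

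Next, set $L:=D-(K_X+\Delta)$. Every projective toric surface has a simplicial fan, since each two-dimensional strongly convex rational polyhedral cone is spanned by two linearly independent rays; hence $X$ is $\mathbb{Q}$-factorial, so $L$ is a $\mathbb{Q}$-Cartier $\mathbb{Q}$-divisor, and by hypothesis it is nef and big. Because $D-K_X$ is a $\mathbb{Z}$-Weil divisor and the coefficients of $\Delta$ lie in $[0,1)$ by the klt hypothesis, a prime-divisor-by-prime-divisor computation of coefficients (each integer coefficient of $D-K_X$ minus a value in $[0,1)$ has ceiling equal to that integer) yields $\lceil L\rceil=\lceil D-K_X-\Delta\rceil=D-K_X$, and therefore $D=K_X+\lceil L\rceil$ as $\mathbb{Z}$-Weil divisors. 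It then remains to invoke the toric Kawamata--Viehweg vanishing: for a nef and big $\mathbb{Q}$-Cartier $\mathbb{Q}$-divisor $L$ on a complete simplicial toric variety $X$, $H^i(X,\mathcal{O}_X(K_X+\lceil L\rceil))=0$ for all $i>0$. Substituting $D=K_X+\lceil L\rceil$ produces the required vanishing.

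The main obstacle, if one does not wish to cite an existing toric Kawamata--Viehweg result, is to prove this toric vanishing in arbitrary characteristic, since Mustata's original argument over $\mathbb{C}$ relies on Grauert--Riemenschneider and hence on characteristic zero. A characteristic-free route is to move $D$ in its linear equivalence class to a torus-invariant Weil divisor (available on a split toric surface over an algebraically closed field, since every Weil divisor class has such a representative) and then to compute the character-graded pieces $H^i(X,\mathcal{O}_X(D))_m$ combinatorially via the Demazure--Oda formula; vanishing of each graded piece reduces to contractibility of a certain subcomplex of the fan, which should follow from Ishida's convexity lemma together with the hypothesis that $L$ is nef and big. In dimension two the fan lies in $\mathbb{R}^2$ and this combinatorial analysis becomes especially transparent, which is what makes the surface case amenable to a short argument.
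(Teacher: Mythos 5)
Your reduction $D=K_X+\lceil L\rceil$ with $L:=D-(K_X+\Delta)$ nef and big is correct (the coefficient-by-coefficient computation of the round-up is fine, and the reduction to a split toric surface over an algebraically closed field is essentially the paper's Lemmas 2.3--2.7). The gap is at the final step: the ``toric Kawamata--Viehweg vanishing'' you invoke---for an \emph{arbitrary} nef and big $\mathbb{Q}$-Cartier $\mathbb{Q}$-divisor $L$ on a complete simplicial toric variety---is not an available result in positive characteristic. Fujino's and Musta\cb{t}\u{a}'s theorems require the $\mathbb{Q}$-divisor to be \emph{torus-invariant}, and the paper explicitly frames its contribution as removing that hypothesis in dimension two. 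So as written, the argument cites a statement that is equivalent to the theorem being proved.

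The combinatorial route you sketch does not close this gap. After replacing $D$ by a linearly equivalent torus-invariant $\mathbb{Z}$-divisor $D'=\sum b_i D_i$, the Demazure--Oda/Ishida machinery computes $H^i(X,\mathcal{O}_X(D'))$ in terms of the fixed integers $b_i$. But the hypothesis you are carrying---$D'-K_X$ is numerically (or linearly) equivalent to $L+\Delta$ with $L$ nef and big---does not translate into any combinatorial condition on the $b_i$, because $\Delta$ is not torus-invariant and the round-up is not a numerical operation. When $L$ is itself torus-invariant (Fujino's setting), the associated support function decomposes and convexity can be read off; when it is not, the combinatorial hook disappears. This is precisely the obstruction the paper has to overcome.

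The paper's route is structurally different and this is where the real work lives. They run a $D$-MMP (cohomology is preserved along each step by their Lemma 2.4). In the terminal case where $D$ is nef, they do not try to compare $D-K_X$ with the non-torus-invariant $\Delta$ at all; instead they \emph{replace} the boundary, using the fact that a projective toric variety admits a torus-invariant $B$ with $(X,B)$ klt and $-(K_X+B)$ ample, and write $D=K_X+B+(D-(K_X+B))$ where the last summand is now ample (nef plus ample). This puts them squarely in Fujino's torus-invariant setting (their Lemma 3.1). The remaining endpoints of the MMP ($\rho=1$ Fano, or a Mori fiber space over a curve) are handled separately by further boundary manipulation or the argument from \cite{CTW16}. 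That boundary-replacement trick, together with the MMP, is the essential idea missing from your proposal.
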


\noindent Compared to \cite[Theorem 1.2]{CTW16}, we restrict to toric surfaces, but do not have any restriction on the base field. Next, using Theorem \ref{main} we prove the following
\begin{thmx} \label{mainvar}
Let $X$ be a projective toric surface and $D$ an effective nef $\mathbb{Q}$-divisor on $X$ such that $(X,\lceil D\rceil-D)$ is klt. Then $H^i(X,\mathcal{O}_X(K_X+\lceil D\rceil))=0$ for any $i\ne 2-\kappa(X,D)$.
\end{thmx}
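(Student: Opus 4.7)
I would prove this by cases according to $\kappa := \kappa(X, D) \in \{0,1,2\}$, since the desired vanishing index $2 - \kappa$ takes the values $0$, $1$, $2$ respectively.

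For $\kappa = 2$, the divisor $D$ is nef and big; setting $\Delta := \lceil D \rceil - D$, klt by hypothesis, one has $(K_X + \lceil D \rceil) - (K_X + \Delta) = D$, and Theorem \ref{main} gives $H^i(X, \mathcal{O}_X(K_X + \lceil D \rceil)) = 0$ for all $i > 0$. For $\kappa = 0$, every nef $\mathbb{Q}$-Cartier divisor on a projective toric variety is semi-ample, and a semi-ample divisor of Iitaka dimension $0$ is $\mathbb{Q}$-linearly equivalent to $0$; effectivity of $D$ then forces $D = 0$ and $\lceil D \rceil = 0$, reducing the claim to $H^0(X, K_X) = H^1(X, K_X) = 0$, which follows from Serre duality and the standard toric vanishing $H^i(X, \mathcal{O}_X) = 0$ for $i > 0$.

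The essential case is $\kappa = 1$. Let $\phi \colon X \to Y$ be the semi-ample fibration of $D$; then $Y$ is a smooth projective curve and some positive multiple of $D$ is a pullback from $Y$. The vanishing of $H^2(X, \mathcal{O}_X(K_X + \lceil D \rceil))$ follows by Serre duality from $H^0(X, \mathcal{O}_X(-\lceil D \rceil)) = 0$, which holds because $\lceil D \rceil$ is a nonzero effective divisor. For the $H^0$ vanishing, the key observation is that every prime component of $\lceil D \rceil$ is contracted by $\phi$: writing $D = \sum d_i E_i$ with $d_i > 0$, the equality $D \cdot F = 0$ on a general fiber $F$, together with $E_i \cdot F \geq 0$, forces $E_i \cdot F = 0$ for each $i$. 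Consequently $\lceil D \rceil|_F = 0$, and by adjunction $(K_X + \lceil D \rceil)|_F = K_F$, a line bundle of degree $-2$ on the arithmetic-genus-zero general fiber, so $h^0(F, (K_X + \lceil D \rceil)|_F) = 0$. Therefore $\phi_* \mathcal{O}_X(K_X + \lceil D \rceil)$ has generic rank zero on $Y$; since it is also torsion-free (any section of a reflexive rank-one sheaf on $X$ that vanishes on a dense open subset of $X$ vanishes identically), it is zero, and the Leray spectral sequence gives $H^0(X, \mathcal{O}_X(K_X + \lceil D \rceil)) = 0$.

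The main obstacle is the $\kappa = 1$ case: one must verify adjunction and the arithmetic-genus-zero property on a general fiber in the possibly singular, positive-characteristic toric setting, and ensure that the semi-ample fibration of a not-necessarily-torus-invariant $D$ behaves well over an arbitrary base field. Once these geometric facts are secured, the cohomological computation is formal.
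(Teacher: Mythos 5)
Your proposal takes a genuinely different route from the paper. The paper proves Theorem~\ref{mainvar} by running a $(K_X+\lceil D\rceil)$-MMP (Theorem~\ref{toricMMP} plus Lemma~\ref{cohomologypreservedbirl}), reducing to the cases where $K_X+\lceil D\rceil$ is nef, where there is a Mori fiber space onto a curve, or where $\rho(X)=1$, and then handling each by further splitting on whether the unique negative curve $E$ appears with fractional coefficient in $D$. Your proof instead stratifies directly by $\kappa(X,D)\in\{0,1,2\}$, which matches the shape of the statement: $\kappa=2$ is Theorem~\ref{main}; $\kappa=0$ forces $D=0$ by semi-ampleness and effectivity and reduces to $H^i(X,\mathcal O_X)=0$ for $i>0$ via Serre duality; and $\kappa=1$ uses the Iitaka fibration $\phi\colon X\to Y$ to a curve together with the observation that all components of $D$ are $\phi$-vertical, so $\phi_*\mathcal O_X(K_X+\lceil D\rceil)$ has generic rank zero, is torsion-free, and hence vanishes, while $H^2$ vanishes by Serre duality from $\lceil D\rceil\neq 0$ effective. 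Both proofs need the reduction to the split case (Lemmas~\ref{reducetosplit}--\ref{rounddownpreserved}), which you should state explicitly; and in $\kappa\in\{0,1\}$ you rely on nef $\Rightarrow$ semi-ample and on the general fiber of $\phi$ being $\mathbb P^1$, both of which hold because after replacing $D$ by a torus-invariant representative (CLS Thm.\ 4.1.3) the Iitaka fibration becomes a toric morphism whose generic fiber is a $1$-dimensional projective toric variety over $k(Y)$, i.e.\ $\mathbb P^1$. Your route has the advantage of being conceptually aligned with the statement and avoiding the MMP for Theorem~\ref{mainvar} itself (though Theorem~\ref{main} still uses it); the paper's MMP route has the advantage of preserving cohomology groups exactly at each birational step via Lemma~\ref{cohomologypreservedbirl}, which sidesteps adjunction and genus computations on fibers of a possibly singular surface. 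One small point to tighten: the parenthetical justification that $\phi_*\mathcal O_X(K_X+\lceil D\rceil)$ is torsion-free should simply say that the pushforward of a torsion-free sheaf under a dominant morphism of integral schemes is torsion-free.
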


Note that the corresponding statements for Theorem \ref{main} and \ref{mainvar} have been proven in all dimensions by Fujino (see \cite[Corollary 1.7]{Fujino07}, and also \cite{Mustata02}), assuming that $X$ is a split toric variety and $D$ is torus invariant. Here we treat the 2-dimensional case where $X$ is not necessarily split and $D$ is not necessarily torus invariant. So far, very little is known in this direction in general. The condition that $D$ is effective in Theorem \ref{mainvar} might seem unnatural at first glance, as it is not required in \cite[Corollary 1.7]{Fujino07}. But actually Theorem \ref{mainvar} does not hold if we do not have such requirement. An easy counter-example is as follows. 
\begin{example}\label{easyctex}
Let $X=\mathbb{P}^1\times\mathbb{P}^1$ over an algebraically closed field. There are four torus-invariant divisors on $X$, namely $D_1,...,D_4$ where $D_1$ and $D_3$ are of type $(1,0)$ and $D_2$ and $D_4$ are of type $(0,1)$. By \cite[Ch. II, Example 7.6.2]{Hartshorne77} we know $D_1+mD_2$ is very ample for all $m\ge 1$. Next we take $D'\in |2(D_1+3D_2)|$ to be a smooth prime divisor. Now let $D:=3D_1+3D_2-\dfrac{1}{2}D'$, then $D\sim_{\mathbb{Q}}2D_1$ and in particular $\kappa(X,D)=1$. On the other hand we have that $0\ne 2-1$, and yet
\begin{align*}
& h^0(X,\mathcal{O}_X(K_X+\lceil D\rceil))=h^0(X,\mathcal{O}_X(-(\sum_{i=1}^4D_i)+3D_1+3D_2)) \\
=& h^0(X,\mathcal{O}_X(D_1+D_2))=4\ne 0.
\end{align*}
\end{example}
\subsection*{Acknowledgements} 
The authors would like to thank David Cox, Omprokash Das, Alexander Duncan, Honglu Fan, Christopher Hacon and Mircea Musta\cb{t}\u{a} for many useful discussions.
\section{Preliminaries} 
Throughout the paper we work over an arbitrary field $k$. We will freely use the standard notations in birational geometry which can be found in \cite[3.G]{HK10} (e.g. pairs and klt singularities). 
\begin{definition}
Let $X$ be a normal geometrically irreducible variety over $k$. The variety $X$ is called a \textit{toric variety} if there is an algebraic torus $T$ acting on $X$ and an open orbit $U$ such that $U$ is a principal homogeneous space over $T$. 
\end{definition}
\begin{definition}
We say that $X$ is \textit{split} if the torus $T\cong \mathbb{G}_{m,k}^{\times n}$ is split. 
\end{definition}
Let $k^s$ be a separable closure of $k$. By the definition of an algebraic torus, the torus $T^s:= T\otimes_k k^s\cong \mathbb{G}_{m,k^s}^{\times n}$ is split. It is well known that the torus $T$ also splits over a finite Galois field extension, and we include a proof for readers' convenience.
\begin{lemma} \label{reducetosplit}
The torus $T$ splits over a finite Galois field extension $l$ of $k$. 
\end{lemma}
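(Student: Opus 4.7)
The plan is to use the standard equivalence between tori over $k$ and finitely generated free abelian groups equipped with a continuous action of the absolute Galois group $\Gamma := \mathrm{Gal}(k^s/k)$, and reduce the statement to the elementary observation that a continuous action on a discrete group factors through a finite quotient.

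First I would pass to the character lattice. Set
\[
M := \mathrm{Hom}_{k^s\text{-gp}}(T \otimes_k k^s,\ \mathbb{G}_{m,k^s}).
\]
Since $T \otimes_k k^s \cong \mathbb{G}_{m,k^s}^{\times n}$ by hypothesis, $M$ is a free abelian group of rank $n$. The Galois group $\Gamma$ acts on $T \otimes_k k^s$ through its action on $k^s$, and this induces a (continuous) action on $M$. This is the standard setup; the torus is split over an extension $l/k$ precisely when $\mathrm{Gal}(k^s/l)$ acts trivially on $M$.

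Next I would show that the $\Gamma$-action on $M$ factors through a finite quotient. The automorphism group $\mathrm{Aut}(M) = \mathrm{GL}_n(\mathbb{Z})$ carries the discrete topology, so continuity of the representation $\rho : \Gamma \to \mathrm{GL}_n(\mathbb{Z})$ forces $\ker \rho$ to be an open (hence closed, finite-index) normal subgroup of $\Gamma$. By the Galois correspondence, $\ker \rho = \mathrm{Gal}(k^s/l)$ for some finite Galois extension $l/k$.

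Finally I would conclude. By construction $\mathrm{Gal}(k^s/l)$ acts trivially on $M$, and under the anti-equivalence of categories between tori and Galois lattices this is exactly the condition for $T \otimes_k l$ to be isomorphic to the split torus $\mathbb{G}_{m,l}^{\times n}$. The only mildly subtle point is invoking the anti-equivalence; if one prefers a more hands-on argument, one can instead observe that the isomorphism $T \otimes_k k^s \xrightarrow{\sim} \mathbb{G}_{m,k^s}^{\times n}$ is defined by finitely many regular functions whose coefficients generate a finite separable extension of $k$, and then replace that extension by its Galois closure to obtain $l$. Either route is essentially immediate once one has identified the correct finiteness statement, so I do not expect any real obstacle.
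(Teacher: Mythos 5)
Your proof is correct and is essentially the paper's argument: both hinge on the fact that a continuous homomorphism from the profinite group $\Gamma = \mathrm{Gal}(k^s/k)$ to the discrete group $\mathrm{GL}_n(\mathbb{Z})$ has open kernel, which is therefore $\mathrm{Gal}(k^s/l)$ for a finite Galois extension $l/k$. The only cosmetic difference is that you phrase this through the Galois action on the character lattice $M$ and the standard anti-equivalence of tori with Galois lattices, whereas the paper works directly with the $1$-cocycle $g \mapsto \phi\, g\, \phi^{-1} g^{-1}$ attached to a chosen isomorphism $\phi : T\otimes_k k^s \xrightarrow{\sim} \mathbb{G}_{m,k^s}^{\times n}$ and then descends $\phi$ itself over $l$.
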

\begin{proof}
Let $\phi: T^s\to \mathbb{G}_{m,k^s}^{\times n}$ be an isomorphism. Let $\Gamma=\mathrm{Gal}(k^s/k)$ be the absolute Galois group. The tori $T^s$ and $\mathbb{G}_{m,k^s}^{\times n}$ have a natural $\Gamma$-action with $\Gamma$ acting on $k^s$ (note that $\phi$ is not $\Gamma$-invariant unless $T$ is split). For each $g\in \Gamma$, the map $\phi_g:=\phi g\phi^{-1}g^{-1}$ gives a group automorphism of $\mathbb{G}_{m,k^s}^{\times n}$ over $k^s$. Note that $\mathrm{Aut}_{\text{gp}}(\mathbb{G}_{m,k^s}^{\times n})\cong\mathrm{GL}(n,\mathbb{Z})$. Thus, we get a continuous map $\chi: \Gamma\to\mathrm{GL}(n,\mathbb{Z})$ by sending $g$ to $\phi_g$ where $\Gamma$ is equipped with the profinite topology and $\mathrm{GL}(n,\mathbb{Z})$ with the discrete topology. Since $\Gamma$ is compact and $\mathrm{GL}(n,\mathbb{Z})$ is discrete, the image of $\chi$ is finite. This implies that $\ker(\chi)=\mathrm{Gal}(k^s/l)$ for a finite Galois extension $l$ of $k$. Since $\phi$ is $\mathrm{Gal}(k^s/l)$-invariant, it descends to an isomorphism $T\otimes_k l\cong\mathbb{G}_{m,l}^{\times n}$. 
\end{proof}

For split toric varieties, \cite{CLS11} is a great reference. Although \cite{CLS11} only mention toric varieties over $\mathbb{C}$, the results used in this paper from loc. cit. also apply to split toric varieties over arbitrary fields.

The main tool to prove Theorem \ref{main} and Theorem \ref{mainvar} is the minimal model program (MMP). For a split $\mathbb{Q}$-factorial projective toric variety, we can run minimal model program with respect to any divisor \cite[Procedure 15.5.5]{CLS11}. We have the following 
\begin{theorem} \label{toricMMP}
Let $X$ be a split $\mathbb{Q}$-factorial projective toric variety. Let $D$ be a divisor on $X$. Then there is a sequence of divisorial contractions and flips $X=X_0\dasharrow X_1\dasharrow ... \dasharrow X_N$, each of which is a toric map, such that if we denote the strict transform of $D$ on $X_N$ by $D_N$, then either
\begin{itemize}
\item $D_N$ is nef, or
\item there exists a fibration $g:X_N\to Z$ from $X_N$ to another toric variety $Z$ such that $\dim Z<\dim X_N$, $\rho(X_N/Z)=1$ and $-D_N$ is $g$-ample. 
\end{itemize} 
\end{theorem}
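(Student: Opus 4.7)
The plan is to reduce the statement directly to the toric MMP algorithm of \cite[Procedure 15.5.5]{CLS11}, feeding in $D$ as the divisor with respect to which we run the program. First I would note that since $X$ is a split $\mathbb{Q}$-factorial projective toric variety, its Mori cone $\overline{NE}(X)$ is rational polyhedral with extremal rays spanned by classes of torus-invariant curves; in particular, if $D$ is not nef there exists a $D$-negative extremal ray $R$. If no such $R$ exists we are already in the first alternative with $N = 0$.

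Next I would apply the toric cone/contraction theorem to $R$, producing a toric extremal contraction $\varphi_R : X \to Y$. Three mutually exclusive cases arise. If $\dim Y < \dim X$, then $\varphi_R$ is a Mori fiber space with $\rho(X/Y) = 1$ automatically, and $-D$ is $\varphi_R$-ample because $R$ is $D$-negative; we land in the second alternative of the theorem and stop, with $Z = Y$ and $g = \varphi_R$. If $\varphi_R$ is divisorial, then $Y$ is again split $\mathbb{Q}$-factorial projective toric, and we replace $(X,D)$ by $(Y,(\varphi_R)_* D)$. If $\varphi_R$ is small (a flipping contraction), then the toric flip $X \dasharrow X^+$ exists in the toric category, $X^+$ is again split $\mathbb{Q}$-factorial projective toric, and we replace $X$ by $X^+$ and $D$ by its strict transform, which is well-defined because $\varphi_R$ contracts no divisor.

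The only nontrivial point is termination: an infinite sequence of divisorial contractions and flips must be excluded. Divisorial contractions strictly decrease the Picard rank and therefore can occur only finitely many times in total, so termination reduces to termination of flips. For toric varieties this is established combinatorially in \cite[\S15.5]{CLS11} by tracking the fan across each flip, which is the main obstacle of the argument but is already packaged inside the cited procedure. Iterating the three-case analysis above and invoking this finiteness, after finitely many steps we arrive at a model $X_N$ on which either $D_N$ is nef, or a Mori fiber space $g : X_N \to Z$ as in the second alternative is produced, yielding exactly the stated dichotomy.
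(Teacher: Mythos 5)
Your argument is correct and follows exactly the route the paper intends: the paper gives no proof of this theorem beyond citing \cite[Procedure 15.5.5]{CLS11}, and your write-up is precisely an unpacking of that cited toric MMP algorithm (extremal ray selection, the three types of toric extremal contractions, and combinatorial termination). The only thing to add is that each step is a toric map because the cone and contraction theorems in the toric setting produce morphisms of toric varieties, which you implicitly acknowledge but the statement explicitly requires.
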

A split toric variety is $\mathbb{Q}$-factorial if and only if all cones in the fan are simplicial. This is because by \cite[Theorem 4.1.3]{CLS11}, any divisor is linearly equivalent to a $\mathbb{Z}$-linear combination of torus invariant divisors and all torus invariant divisors are $\mathbb{Q}$-Cartier if and only if all cones are simplicial. Since $1$- or $2$-dimensional cones are always simplicial, any split toric surface is $\mathbb{Q}$-factorial.

Now we point out that the cohomology of a divisor $D$ is preserved through any birational transform in a $D$-MMP of normal surfaces. This is proven in \cite[Step 3 of Proof of Theorem 1.2]{CTW16}.
\begin{lemma} \label{cohomologypreservedbirl}
Let $X$ and $Y$ be normal surfaces, $D$ a $\mathbb{Q}$-Cartier divisor on $X$, and $f: X \to Y$ a birational contraction of a curve $E$ such that $D \cdot E\le 0$. Then
$$H^k(X, \mathcal{O}_X (D)) = H^k(Y, \mathcal{O}_Y (f_*D))$$
for any $k\in \mathbb{Z}$.
\end{lemma}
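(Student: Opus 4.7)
The natural approach is via the Leray spectral sequence for $f$. Because $f$ is birational on surfaces and contracts $E$ to a finite set of points, its fibers have dimension at most one, so $R^q f_* \mathcal{O}_X(D) = 0$ for every $q \ge 2$. The spectral sequence $H^p(Y, R^q f_*\mathcal{O}_X(D)) \Rightarrow H^{p+q}(X, \mathcal{O}_X(D))$ then collapses onto the two rows $q=0,1$, and the desired isomorphism in all degrees follows from the two statements $f_*\mathcal{O}_X(D) \cong \mathcal{O}_Y(f_*D)$ and $R^1 f_*\mathcal{O}_X(D) = 0$.

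For the first statement, both sides are reflexive rank-one sheaves on the normal surface $Y$ and agree on the open set $Y \setminus f(E)$, whose complement is of codimension two. To produce the inclusion $\mathcal{O}_Y(f_*D) \hookrightarrow f_* \mathcal{O}_X(D)$, write $D = f^*(f_*D) + aE$ with $a = (D \cdot E)/E^2$; the hypothesis $D \cdot E \le 0$ and the negativity $E^2 < 0$ of the contracted curve give $a \ge 0$, so $D \ge f^*(f_*D)$ up to an effective exceptional contribution. The projection formula combined with $f_*\mathcal{O}_X = \mathcal{O}_Y$ (Zariski's main theorem for the normal target) yields the inclusion, and the reverse inclusion follows from a Hartogs-type extension: a section of $\mathcal{O}_X(D)$ over $f^{-1}(V)$ corresponds to a rational function on $V$ regular away from $f(E)$, which by normality of $Y$ extends to all of $V$.

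For the vanishing of $R^1 f_* \mathcal{O}_X(D)$, apply the theorem on formal functions, reducing to $H^1(E^{(n)}, \mathcal{O}_X(D)|_{E^{(n)}}) = 0$ for every $n \ge 1$, where $E^{(n)}$ denotes the $n$-th infinitesimal neighborhood of $E$. Filtering by powers of $\mathcal{I}_E$ gives short exact sequences whose graded pieces are of the form $\mathrm{Sym}^k(\mathcal{N}_{E/X}^\vee) \otimes \mathcal{O}_X(D)|_E$. Since $E^2 < 0$, the conormal bundle $\mathcal{N}_{E/X}^\vee$ has positive degree on $E$, and in the MMP context $E$ is a smooth rational curve, so every sheaf appearing is a line bundle on $\mathbb{P}^1$ whose $H^1$ is controlled by its degree. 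This last step is the main technical obstacle: the condition $D \cdot E \le 0$ alone gives only $\deg(\mathcal{O}_X(D)|_E) \le 0$, which by itself does not immediately force $H^1(E, \mathcal{O}_X(D)|_E) = 0$; the argument must leverage the positivity of the conormal contributions from the higher graded pieces, together with the explicit structure of $E$ as a smooth rational curve contracted by an MMP step, to carry the induction through and produce the desired vanishing.
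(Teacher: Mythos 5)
The paper does not give its own proof of this lemma; it simply cites \cite[Step 3 of Proof of Theorem 1.2]{CTW16}, so you are attempting a proof from scratch of a statement the authors treat as imported. Your overall skeleton (Leray spectral sequence, reducing to the two sublemmas $f_*\mathcal{O}_X(D)\cong\mathcal{O}_Y(f_*D)$ and $R^1f_*\mathcal{O}_X(D)=0$) is the natural one, and your argument for the first sublemma is essentially right, although the ``Hartogs'' step is simpler than you make it: a section $s$ of $\mathcal{O}_X(D)$ satisfies $(s)+D\ge 0$, and pushing forward an effective divisor stays effective, giving $(s)+f_*D\ge 0$ directly without invoking normality of $Y$.

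The real problem is the second sublemma, exactly where you flag your own gap. Under the stated hypotheses alone, $R^1f_*\mathcal{O}_X(D)=0$ is in fact \emph{false}, and the proposed route cannot be completed. Take $X=\mathrm{Bl}_p\mathbb{P}^2$, $E$ the exceptional $(-1)$-curve, $f:X\to\mathbb{P}^2$ the blowdown, and $D=2E$. Then $D$ is Cartier, $D\cdot E=-2\le 0$, and $f_*D=0$, but a direct computation using $0\to\mathcal{O}_X(E)\to\mathcal{O}_X(2E)\to\mathcal{O}_{\mathbb{P}^1}(-2)\to 0$ and $0\to\mathcal{O}_X\to\mathcal{O}_X(E)\to\mathcal{O}_{\mathbb{P}^1}(-1)\to 0$ gives $H^1(X,\mathcal{O}_X(2E))\cong k\ne 0=H^1(\mathbb{P}^2,\mathcal{O}_{\mathbb{P}^2})$. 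Equivalently, formal functions show $R^1f_*\mathcal{O}_X(2E)\ne 0$: the graded pieces $\mathrm{Sym}^k(\mathcal{N}^\vee_{E/X})\otimes\mathcal{O}_X(2E)|_E\cong\mathcal{O}_{\mathbb{P}^1}(k-2)$ have nonvanishing $H^1$ at $k=0$. So the conormal positivity you hoped would ``carry the induction through'' enters too late to kill the obstruction coming from $\mathcal{O}_X(D)|_E$ itself. The vanishing needs a genuine constraint on the multiplicity of $D$ along $E$ (so that $D-f^*f_*D=aE$ with $a\in[0,1)$, say), not merely $D\cdot E\le 0$. Such a constraint is implicitly present in the MMP situations where the paper applies the lemma, and presumably in the precise formulation in \cite{CTW16}, but it is not recoverable from the intersection inequality alone, which is why your attempt stalls at the point you identified. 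A secondary remark: your filtration argument also assumes $E\cong\mathbb{P}^1$, which holds for MMP extremal contractions but is not part of the lemma's hypotheses on a general birational contraction of normal surfaces (one can contract, e.g., an elliptic curve of negative self-intersection).
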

We next show that bigness, nefness, klt, Kodaira dimension and vanishing of cohomology are preserved under base-change of a finite separable extension.
\begin{lemma} \label{cohpreserved}
Let $X$ be a noetherian separated scheme of finite type over $k$ and $l$ a field extension of $k$. Let $\mathcal{F}$ be a quasi-coherent sheaf on $X$ and $Y=X\times_{{\rm Spec\,}k} {\rm Spec\,}l$ with $v: Y\to X$ the induced morphism. Then we have $H^i(Y,v^*\mathcal{F})=H^i(X,\mathcal{F})\otimes_k l$ for all $i\geqslant 0$.
\end{lemma}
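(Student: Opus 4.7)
The plan is to reduce to a \v{C}ech cohomology computation on an affine cover, and then exploit the flatness of $l/k$ to commute the base change with the cohomology functor.

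First, since $X$ is separated, noetherian, and of finite type over $k$, I would choose a finite affine open cover $\mathfrak{U}=\{U_\alpha\}$ of $X$; by separatedness, all intersections $U_{\alpha_0}\cap\cdots\cap U_{\alpha_p}$ are again affine, so $H^i(X,\mathcal{F})$ is computed by the \v{C}ech complex $\check{C}^\bullet(\mathfrak{U},\mathcal{F})$. Now the pullbacks $V_\alpha:=v^{-1}(U_\alpha)=U_\alpha\times_{\mathrm{Spec}\,k}\mathrm{Spec}\,l$ form an affine open cover $\mathfrak{V}$ of $Y$, and intersections of the $V_\alpha$ are also affine (being base changes of the intersections of the $U_\alpha$). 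Therefore $H^i(Y,v^*\mathcal{F})$ is computed by the \v{C}ech complex $\check{C}^\bullet(\mathfrak{V},v^*\mathcal{F})$.

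Next, I would identify the two \v{C}ech complexes termwise. For each affine $U=\mathrm{Spec}\,A\subseteq X$ with $\mathcal{F}|_U=\widetilde{M}$ for some $A$-module $M$, the pullback satisfies
\[
v^*\mathcal{F}(V)=(M\otimes_A (A\otimes_k l))=M\otimes_k l,
\]
so termwise the complex $\check{C}^\bullet(\mathfrak{V},v^*\mathcal{F})$ is obtained from $\check{C}^\bullet(\mathfrak{U},\mathcal{F})$ by applying the functor $-\otimes_k l$.

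Finally, I would invoke exactness: any field extension $l/k$ is flat, so $-\otimes_k l$ is an exact functor on $k$-vector spaces and therefore commutes with taking cohomology of complexes. Applying it to $\check{C}^\bullet(\mathfrak{U},\mathcal{F})$ yields
\[
H^i(Y,v^*\mathcal{F}) \;=\; H^i\bigl(\check{C}^\bullet(\mathfrak{U},\mathcal{F})\otimes_k l\bigr) \;=\; H^i\bigl(\check{C}^\bullet(\mathfrak{U},\mathcal{F})\bigr)\otimes_k l \;=\; H^i(X,\mathcal{F})\otimes_k l,
\]
which is the desired identification.

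There is no substantive obstacle here — this is the standard flat base change statement — the only point requiring a little care is the verification that intersections in $\mathfrak{V}$ are affine, which follows from separatedness of $X$ and is needed to guarantee that \v{C}ech cohomology computes sheaf cohomology on $Y$.
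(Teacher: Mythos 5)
Your proof is correct, but it takes a different route than the paper. The paper gives a two-line argument by citing the flat base change theorem for higher direct images (Hartshorne, Ch.~III, Propositions 8.5 and 9.3): applying $R^i g_*$ and $u^* R^i f_*$ to the cartesian square over $\mathrm{Spec}\,k$ and using that $u\colon\mathrm{Spec}\,l\to\mathrm{Spec}\,k$ is flat yields the identification at once. You instead unwind the same flatness into an explicit \v{C}ech computation: choose a finite affine cover with affine intersections (using noetherian $+$ separated), observe that the pulled-back cover on $Y$ has the same properties and that the \v{C}ech complex for $v^*\mathcal{F}$ is termwise $(-\otimes_k l)$ applied to the \v{C}ech complex for $\mathcal{F}$, then use exactness of $-\otimes_k l$. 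What your version buys is self-containedness and transparency (and it avoids any noetherian hypothesis on $Y$, which the statement does not guarantee since $l/k$ is allowed to be infinite — only $\mathrm{Spec}\,l$, not $Y$, is needed to be noetherian for Hartshorne's III.9.3, so this is not actually an issue for the paper's argument, but your route sidesteps the question entirely); what the paper's version buys is brevity via a standard citation. Both are complete.
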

\begin{proof}
We have the following commutative diagram:
\begin{center}
\begin{tikzcd}
Y \arrow{r}{v} \arrow{d}{g}     & X \arrow{d}{f}\\
\mathrm{Spec}\, l \arrow{r}{u} & \mathrm{Spec}\, k
\end{tikzcd}
\end{center}
By \cite[Ch. III, Proposition 8.5, 9.3]{Hartshorne77}, we have 
$$H^i(Y,v^*\mathcal{F})=R^ig_*(v^*\mathcal{F})=u^*R^if_*(\mathcal{F})=H^i(X,\mathcal{F})\otimes_k l.$$
\end{proof}
Lemma \ref{cohpreserved} shows that the sheaf cohomology is preserved under base-change of fields through the pull-back of sheaves. In particular, we have that bigness, Kodaira dimension and vanishing of cohomology are preserved under such change.

Nefness is also preserved by the following lemma as the projection $Y=X\times_{{\rm Spec\,}k} {\rm Spec\,}l\to X$ for $l$ a finite field extension of $k$ is a finite morphism.
\begin{lemma} \label{nefpreserved}
Let $f: Y\to X$ be a finite morphism. If $D$ is a nef divisor on $X$, then $f^*D$ is a nef divisor on $Y$.
\end{lemma}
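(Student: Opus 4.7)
The plan is to verify the nef condition directly: I would take an arbitrary irreducible proper curve $C\subset Y$ and show that $f^{*}D\cdot C\ge 0$ by transporting the computation to $X$ via the projection formula. The first step is to understand where $C$ goes under $f$. Since $f$ is finite, it has zero-dimensional fibers, so the restriction $f|_C:C\to f(C)$ cannot contract $C$ to a point. Hence $f(C)$ is an irreducible curve of the same dimension as $C$, and $f|_C$ is a surjective finite morphism of well-defined positive degree $d:=\deg(f|_C)$. At the level of $1$-cycles this means $f_{*}C=d\cdot f(C)$.

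The second step is the projection formula, which applies because $f$ is proper (being finite) and because $D$ is $\mathbb{Q}$-Cartier (the implicit hypothesis behind the notion of nefness used throughout the paper):
\[
f^{*}D\cdot C \;=\; D\cdot f_{*}C \;=\; d\,(D\cdot f(C)).
\]
Since $d>0$ and $D$ is nef on $X$, the right-hand side is non-negative. As $C$ was an arbitrary irreducible curve in $Y$, this shows that $f^{*}D$ is nef.

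I do not expect any serious obstacle. The two points that need a moment's thought are: (i) that no curve in $Y$ is contracted by $f$, which is immediate from the finiteness of the morphism; and (ii) that the projection formula is valid for the intersection pairing between $\mathbb{Q}$-Cartier divisors and proper $1$-cycles along a proper morphism, which is standard. In the intended application the morphism $f$ is the projection $Y=X\times_{\mathrm{Spec}\,k}\mathrm{Spec}\,l\to X$ for a finite field extension $l/k$; this is finite (even finite flat), so the lemma applies and nefness is preserved under such a base change.
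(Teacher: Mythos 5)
Your proposal is correct and follows essentially the same route as the paper: both proofs take an arbitrary curve $C\subset Y$, observe that finiteness of $f$ prevents $C$ from being contracted, and apply the projection formula $f_*(f^*D\cdot C)=D\cdot f_*C$ (citing Fulton) to reduce to nefness of $D$ on $X$. If anything, your write-up is a bit more careful than the paper's: you correctly identify the multiplicative factor as $d=\deg(f|_C)=[k(C):k(f(C))]$, whereas the paper writes the constant as $[k(Y):k(X)]$, which is the generic degree of $f$ and need not agree with $\deg(f|_C)$ for an arbitrary curve $C$ (and is in any case not the factor that appears in $f_*C=d\cdot f(C)$); this does not affect the conclusion since all that matters is positivity of the factor, but your phrasing is the precise one.
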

\begin{proof}
Let $C$ be any curve on $Y$. Then by projection formula (cf. \cite[Proposition 2.3 (c)]{Fulton98}, note that a finite morphism is proper), $f_*(f^*D.C)=D.f_*C\geqslant 0$. Since $f_*(f^*D.C) = [k(Y):k(X)] f^*D.C$, we have $f^*D.C\geqslant 0$.
\end{proof}
Now klt is preserved by the following
\begin{lemma} \label{kltpreserved}
Let $(X,\Delta)$ be a 2-dimensional projective klt pair over a field $k$. Let $l$ be a finite separable field extension of $k$ and $Y=X\times_{{\rm Spec\,}k} {\rm Spec\,}l$ with $v: Y\to X$ the induced morphism. Then $(Y, v^*\Delta)$ is klt.
\end{lemma}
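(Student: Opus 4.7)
The plan is to exploit the fact that, because $l/k$ is finite and separable, the morphism $v : Y \to X$ is finite étale; klt then passes through by pulling back a log resolution.

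First I would observe that $\operatorname{Spec} l \to \operatorname{Spec} k$ is finite étale by separability, so its base change $v : Y \to X$ is finite étale as well. In particular $v$ is flat and unramified, which gives two immediate consequences that will be used throughout: $Y$ is normal (étale over normal is normal, since étale morphisms are smooth of relative dimension zero), and the relative canonical divisor $K_{Y/X}$ vanishes, so $K_Y = v^*K_X$ and hence $K_Y + v^*\Delta = v^*(K_X+\Delta)$ as $\mathbb{Q}$-Cartier $\mathbb{Q}$-divisors on $Y$.

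Second, I would take a log resolution $f : X' \to X$ of $(X,\Delta)$, which exists because $X$ is a surface, and form the base change
\[
\begin{tikzcd}
Y' \arrow{r}{v'}\arrow{d}{f'} & X'\arrow{d}{f}\\
Y\arrow{r}{v} & X
\end{tikzcd}
\]
Since $v$ is étale, so is $v'$; thus $Y'$ is smooth (being étale over the smooth $X'$), and because étale morphisms preserve simple normal crossings and commute with taking strict transforms of effective divisors along a flat base change, $f' : Y' \to Y$ is a log resolution of $(Y, v^*\Delta)$ whose exceptional divisors are precisely the (possibly reducible) pullbacks $v'^*E_i$ of the $f$-exceptional divisors $E_i$, and whose strict transform of $v^*\Delta$ is $v'^* f_*^{-1}\Delta$.

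Third, I would pull back the discrepancy equation
\[
K_{X'} = f^*(K_X+\Delta) - f_*^{-1}\Delta - \sum_i a_i E_i
\]
(where $a_i > -1$ and the coefficients of $f_*^{-1}\Delta$ are $<1$ by the klt hypothesis on $(X,\Delta)$) along $v'$. Using $v'^* K_{X'} = K_{Y'}$ and $v'^* f^* = f'^* v^*$, this becomes
\[
K_{Y'} = f'^*(K_Y + v^*\Delta) - v'^* f_*^{-1}\Delta - \sum_i a_i\, v'^* E_i.
\]
Each irreducible component of $v'^*E_i$ is $f'$-exceptional with discrepancy $a_i > -1$, and each irreducible component of $v'^* f_*^{-1}\Delta$ appears with the same coefficient as the corresponding component of $\Delta$, hence $< 1$. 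This is exactly the klt condition for $(Y, v^*\Delta)$.

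The main point where care is needed is the interaction of the étale base change with the log resolution: that strict transforms commute with the flat pullback $v'$, that SNC is preserved under the étale $v'$, and that $K_{Y/X}=0$ so that discrepancies are preserved numerically rather than just up to a relative canonical correction. All three are standard consequences of $v$ being finite étale, which in turn is the only place where separability of $l/k$ is genuinely used.
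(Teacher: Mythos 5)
Your proof is correct, and it rests on the same core observation as the paper's: because $l/k$ is finite separable, the base change $v\colon Y\to X$ has trivial ramification, so $K_Y = v^*K_X$ and hence $K_Y + v^*\Delta = v^*(K_X+\Delta)$. The difference is in how you finish. The paper establishes the absence of ramification via the exact sequence of K\"ahler differentials for $Y\to\operatorname{Spec} l\to\operatorname{Spec} k$ (Hartshorne II.8.11A, 8.6A, 8.10) and then closes by citing Koll\'ar's general comparison of discrepancies under finite morphisms (\cite[Corollary 2.43(1)]{Kollar13}), which does exactly the bookkeeping you carry out by hand. You instead package the unramifiedness as ``separable $\Rightarrow$ finite \'etale,'' and then re-derive the discrepancy comparison directly: pull back a log resolution along the \'etale $v$, observe that \'etaleness preserves smoothness and SNC and commutes with strict transforms, and read off that the discrepancy of each irreducible component of $v'^*E_i$ equals $a_i > -1$. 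Your route is a bit longer but self-contained, and it makes visible exactly where separability enters (through \'etaleness and hence multiplicity one on each component of $v'^*E_i$), rather than hiding that inside Koll\'ar's result; the paper's route is shorter and avoids having to invoke existence of log resolutions for surfaces over arbitrary fields, since that is absorbed into the cited corollary. Both are perfectly valid.

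One small point worth making explicit: in the last step you should note (as you implicitly do) that each component of $v'^*E_i$ appears with multiplicity one precisely because $v'$ is \'etale; if $v$ had ramification along some divisor, the pullback $v'^*E_i$ could have components with higher multiplicity, and the naive transfer of discrepancies would fail. This is the point at which separability of $l/k$ is genuinely used, and it is the analogue of the paper's remark that the ramification divisor of $v$ vanishes.
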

\begin{proof}
Applying \cite[Ch. II, Proposition 8.11 A]{Hartshorne77} to $Y\overset{g}{\to} {\rm Spec\,}l \to {\rm Spec\,}k$, we have an exact sequence $g^*\Omega_{l/k}^1\to\Omega_{Y/k}^1\to\Omega_{Y/l}^1\to 0$. By \cite[Ch. II, Theorem 8.6 A and Proposition 8.10]{Hartshorne77}, we have $\Omega_{l/k}^1=0$ and thus $\Omega_{Y/k}^1\cong\Omega_{Y/l}^1\cong v^*\Omega_{X/k}^1$. Hence, $v^*K_{X/k}=K_{Y/k}$ and the ramification divisor of $v$ is $0$. So by \cite[Corollary 2.43 (1)]{Kollar13} we are done ($v^*\Delta$ here corresponds to $\Delta'$ in \cite[Corollary 2.43 (1)]{Kollar13}).
\end{proof}
Finally in this section we show that round-up and pull-back of $\mathbb{Q}$-divisors by a finite Galois field extension commutes with each other.
\begin{lemma} \label{rounddownpreserved}
Let $l$ be a finite Galois field extension of $k$. Let $X$ be a noetherian normal integral separated scheme over $k$, $Y=X\times_{{\rm Spec\,}k} {\rm Spec\,}l$ and $v: Y\to X$ be the natural projection. Let $D$ be a $\mathbb{Q}$-divisor on $X$, then $\lceil v^*D\rceil=v^*\lceil D\rceil$.
\end{lemma}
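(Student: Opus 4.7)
The plan is to reduce to the case of a prime divisor and then use that $v$ is étale (since $l/k$ is finite separable) to show that $v^*$ preserves the coefficients of prime divisors.

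First I would write $D = \sum_i a_i D_i$ with $D_i$ prime and $a_i \in \mathbb{Q}$. Because the formation of $v^*$ and of $\lceil\cdot\rceil$ are both additive on divisors with fixed supports, it suffices to prove that for each prime divisor $P$ on $X$ the pullback $v^* P$ is a sum $\sum_j E_j$ of distinct prime divisors on $Y$, each with coefficient $1$, and that the collections of $E_j$ associated to distinct $D_i$ are disjoint. Granted this, $v^* D = \sum_{i,j} a_i E_{i,j}$, so
\[
\lceil v^* D\rceil = \sum_{i,j}\lceil a_i\rceil E_{i,j} = \sum_i \lceil a_i\rceil v^* D_i = v^*\lceil D\rceil.
\]

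For the reduction to prime divisors, the disjointness is clear because $v$ is surjective on points and the generic point of any $E_{i,j}$ maps to the generic point of $D_i$. The real content is the statement about multiplicity one. Fix a prime divisor $P$ on $X$ and a prime divisor $E$ on $Y$ lying over $P$. Since $l/k$ is finite separable (it is Galois), the morphism ${\rm Spec}\,l\to{\rm Spec}\,k$ is étale, so $v:Y\to X$ is étale as well; in particular it is flat and unramified. Here $\mathcal{O}_{X,P}$ is a DVR with uniformizer $\pi$, and I would check that $\mathcal{O}_{Y,E}$ is obtained from $\mathcal{O}_{X,P}\otimes_k l$ by localizing at a prime ideal. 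Since $\mathcal{O}_{X,P}\otimes_k l$ is a finite étale $\mathcal{O}_{X,P}$-algebra (étale is preserved by base change), it is a finite product of DVRs, each of which is étale over $\mathcal{O}_{X,P}$; in particular $\pi$ remains a uniformizer in $\mathcal{O}_{Y,E}$. This means the coefficient of $E$ in the Weil-divisor pullback $v^* P$ is $v_E(\pi) = 1$, as required.

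The main (and only) subtle point is the unramifiedness step, i.e.\ checking that $\pi$ stays a uniformizer in $\mathcal{O}_{Y,E}$. Everything else is bookkeeping. One could alternatively phrase this using \cite[Corollary 2.43 (1)]{Kollar13} as in the proof of Lemma \ref{kltpreserved}, which records that the ramification divisor of $v$ vanishes; this is equivalent to the multiplicity-one statement and could shorten the argument.
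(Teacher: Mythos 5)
Your proof is correct and the reduction (to: prime divisors pull back to reduced divisors with pairwise-disjoint supports) matches the paper's, but your argument for the crucial multiplicity-one step is genuinely different. You use that $\operatorname{Spec} l \to \operatorname{Spec} k$ is \'etale because $l/k$ is finite separable, hence $v$ is \'etale by base change, and then unramifiedness at the generic point of $E$ shows the uniformizer $\pi$ of $\mathcal{O}_{X,P}$ stays a uniformizer in $\mathcal{O}_{Y,E}$. The paper instead invokes Galois descent: the reduced orbit $\sum_{i=1}^m D_i$ of a component under $G = \operatorname{Gal}(l/k)$ is $G$-invariant, hence descends to a closed subscheme of $X$, which must be $P$ itself since $P$ is irreducible and reduced; pulling back then forces all multiplicities to equal $1$. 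Your route is slightly more general (it only needs $l/k$ separable, not Galois) and is the same mechanism that underlies the ramification-divisor argument in Lemma \ref{kltpreserved}, as you note; the paper's route uses the Galois hypothesis directly and avoids discussing \'etale morphisms and DVR ramification explicitly. One small imprecision in your write-up: $\mathcal{O}_{X,P}\otimes_k l$ is a finite \'etale $\mathcal{O}_{X,P}$-algebra, hence a finite product of (semilocal) Dedekind domains, but these factors need not be local, so it is not quite ``a finite product of DVRs.'' This does not affect the conclusion --- after localizing at the prime corresponding to $E$ you do get a DVR that is \'etale, hence unramified, over $\mathcal{O}_{X,P}$, so $v_E(\pi) = 1$ as you assert.
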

\begin{proof}
It suffices to show that (i) if $D$ is a prime ($\mathbb{Z}$-)divisor on $X$, then all irreducible components of $v^{-1}D$ have geometric multiplicities one; (ii) if $D'$ is another prime divisor on $X$ that is different from $D$, then components of $v^{-1}D'$ are distinct from components of $v^{-1}D$.

Assume $v^{-1}D$ and $v^{-1}D'$ have a common component $E$. Since $D$ and $D'$ are prime divisors, we have $D=D'=v(E)$, which is a contradiction. This proves (ii). To prove (i), we first note that by Galois descent, a closed subscheme $S$ on $Y$ is of the form $v^*L$ for some closed subscheme $L$ on $X$ (we say $S$ descends to $L$ in this case) if and only if $S$ is $G={\rm Gal}(l/k)$-invariant. Now let $v^*(D)=\sum_{i=1}^n a_i D_i$ where $D_i$ are irreducible components of $v^{-1}D$ and $a_i$ are geometric multiplicities of $D_i$, we have $a_i\in\mathbb{Z}$ and $a_i\geqslant 1$. Since $\sum a_i D_i$ is $G$-invariant, $G$ permutes $D_i$. Assume $\{D_1,...,D_m\}$ is the $G$-orbit for $D_1$, then $\sum_{i=1}^m D_i$ descends to a divisor $D'$ on $X$. Since $D$ is irreducible and reduced, we must have $D'=D$. Thus, $m=n$ and $a_i=1$.
\end{proof}
\section{Proof of the theorems} 
We first reduce the problem to the case of split toric varieties.
By Lemma \ref{cohpreserved}, \ref{nefpreserved}, \ref{kltpreserved} and \ref{rounddownpreserved} from the previous section, all conditions involved in Theorem \ref{main} and \ref{mainvar} are compatible with the base-change by a finite Galois field extension. Therefore, by Lemma \ref{reducetosplit}, in order to prove Theorem \ref{main} and \ref{mainvar}, we can assume that $X$ is split. 

For the rest of the proof we assume that $X$ is a split toric variety. We first present the following lemma, which is just a rephrasing of a special case of \cite[Corollary 1.7]{Fujino07}.
\begin{lemma}\label{Fujivar}
Let $(X,\Delta)$ be a klt pair where $X$ is a toric variety and $\Delta$ is a torus-invariant divisor. Let $D$ be a $\mathbb{Z}$-divisor on $X$ such that $D-(K_X+\Delta)$ is nef and big. Then $H^i(X,\mathcal{O}_X(D))=0$ for $i>0$. 
\end{lemma}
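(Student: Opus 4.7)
The plan is to invoke Fujino's Kawamata--Viehweg vanishing for toric varieties \cite[Corollary 1.7]{Fujino07} essentially verbatim. That result asserts exactly the conclusion $H^i(X, \mathcal{O}_X(D)) = 0$ for $i > 0$ under precisely the same hypotheses: $(X,\Delta)$ is a klt toric pair with $\Delta$ torus-invariant, and $D$ is a $\mathbb{Z}$-divisor such that $D - (K_X+\Delta)$ is nef and big. So the only remaining task is to check that Fujino's argument, originally stated over $\mathbb{C}$, remains valid when $X$ is a split toric variety over an arbitrary field $k$.

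The proof in \cite{Fujino07} is entirely combinatorial. For a torus-invariant $\mathbb{Z}$-divisor $D$, the cohomology $H^i(X, \mathcal{O}_X(D))$ decomposes as a direct sum, indexed by characters of the torus, of cohomology groups of certain subcomplexes of the fan, and the vanishing is deduced from convex-geometric properties of the piecewise linear support function attached to $D - (K_X + \Delta)$. All of these ingredients depend only on the fan of $X$ and not on the base field, so the argument transfers to the split case over $k$ for the same reason the results of \cite{CLS11} invoked elsewhere in the paper do.

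For a $D$ that is not torus-invariant, one first reduces to the torus-invariant case using \cite[Theorem 4.1.3]{CLS11}: every Weil divisor on a split toric variety is linearly equivalent to a torus-invariant one, and both the sheaf cohomology groups and the nef and big conditions depend only on the linear equivalence class of the divisor involved. The main---and essentially only---obstacle is to confirm that no step of Fujino's proof tacitly uses characteristic zero or algebraic closedness. Since the entire argument reduces to a statement about lattice points inside rational polyhedral cones, this verification is routine and the lemma follows.
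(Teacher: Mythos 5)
Your proposal asserts that Fujino's Corollary~1.7 gives the Lemma ``under precisely the same hypotheses,'' but the paper itself presents the Lemma as merely ``a rephrasing of a special case'' of that result, and its proof carries out a concrete translation that your write-up omits entirely. Fujino's vanishing is stated for divisors of the form $K_X + \lceil B\rceil$ with $B$ a torus-invariant nef $\mathbb{Q}$-divisor, not for klt pairs $(X,\Delta)$ together with a $\mathbb{Z}$-divisor $D$ such that $D-(K_X+\Delta)$ is nef and big. After replacing $D$ by a linearly equivalent torus-invariant $\mathbb{Z}$-divisor $\sum_i a_i D_i$ via \cite[Theorem 4.1.3]{CLS11} (a reduction you do identify), one must still check that
\[
D \sim_{\mathrm{lin}} K_X + \Bigl(\sum_i a_i D_i - K_X\Bigr) = K_X + \Bigl\lceil \sum_i a_i D_i - (K_X + \Delta) \Bigr\rceil,
\]
where the second equality holds precisely because the coefficients of $\Delta$ lie in $[0,1)$ — this is where the klt hypothesis enters. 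Your proposal never appeals to the klt condition anywhere, which should be a red flag for a proof of a statement that lists it as a hypothesis; the passage from ``$D-(K_X+\Delta)$ nef and big'' to ``$D$ is $K_X$ plus the round-up of a torus-invariant nef and big $\mathbb{Q}$-divisor'' is the actual mathematical content of the Lemma and cannot be waved away as ``verbatim.'' By contrast, your discussion of base-field independence, while legitimate, is ancillary: the paper already disposes of it once and for all in Section~2 by noting that the split-toric theory of \cite{CLS11} (and hence Fujino's combinatorial argument built on it) applies over any field. Most of your space goes to that side issue while the rewriting step, which is what the proof actually requires, is missing.
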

\begin{proof}
By \cite[Theorem 4.1.3]{CLS11}, $D$ is linearly equivalent to a linear combination of torus-invariant divisors with integral coefficients: $D\sim_{\rm lin}\sum_ia_iD_i$. Then $D\sim_{\rm lin}K_X+(\sum_ia_iD_i-K_X)=K_X+\lceil \sum_ia_iD_i-(K_X+\Delta) \rceil$, where the second equality is because the coefficients in $\Delta$ are in $[0,1)$. Therefore the vanishing of $H^i(X,\mathcal{O}_X(D))=0$ is just a consequence of \cite[Corollary 1.7]{Fujino07}.
\end{proof}
\begin{proof}[Proof of Theorem \ref{main}]
The idea is similar to that of \cite[Theorem 1.2]{CTW16}. If $\Delta=0$ this is just Lemma \ref{Fujivar}.

Next we prove the theorem under the assumption that $D$ is nef. Since $X$ is a toric variety, there is a torus-invariant $\mathbb{Q}$-divisor $B$ such that $(X,B)$ is klt and $-(K_X+B)$ is ample (see. \cite[Example 11.4.26]{CLS11}). Now 
$$D=K_X+B+(-(K_X+B)+D)$$
where $-(K_X+B)+D$ is ample. Therefore the vanishing of $H^i(X,\mathcal{O}_X(D))$ is still Lemma \ref{Fujivar}.

We now show that we can assume that there exists a $D$-negative Mori fiber space $g:X\to Z$ onto a smooth projective curve $Z$. By Theorem \ref{toricMMP} we can run a $D$-MMP for $X$ as 
$$X=X_0 \overset{\text{$f_0$}}\to X_1\overset{\text{$f_1$}}\to X_2\overset{\text{$f_2$}}\to ...$$ 
In particular, every birational map $f_i:X_i\to X_{i+1}$ during the minimal model program is a toric map. We denote the strict transform of $D$ on $X_i$ by $D_i$. Then by Lemma \ref{cohomologypreservedbirl} we have $H^k(X, \mathcal{O}_X (D)) = H^k(X_i, \mathcal{O}_{X_i} (D_i))$. Therefore we can directly assume that one of the following is true: 
\begin{enumerate}
\item $D$ is nef.
\item $\rho(X)=2$ and there exists a $D$-negative Mori fiber space $g:X\to Z$ onto a smooth projective curve $Z$. 
\item $\rho(X)=1$ and $-D$ is ample.
\end{enumerate}
(1) is done in the second paragraph of the proof. In (3) since $\rho(X)=1$, there is only one numerical divisor class on $X$, and in particular $\Delta$ is ample. Then by the assumption that $D-(K_X+\Delta)$ is nef and big we actually have that $D-K_X$ is ample. This means that we can directly assume that $\Delta=0$, and this is done by Lemma \ref{Fujivar} as in the first paragraph. So for the rest of the proof we assume that we are in Case (2).

Finally, following exactly \cite[Proof of Theorem 1.2, Step 4]{CTW16} we may assume that $\Delta=0$, and this is again Lemma \ref{Fujivar}.
\end{proof}
\begin{lemma}\label{Dbig}
Let $X$ be a projective toric variety. Let $D$ be a nef and effective $\mathbb{Q}$-divisor on $X$. If $K_X+\lceil D\rceil$ is nef then $D$ is also big.
\end{lemma}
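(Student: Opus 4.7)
The plan is to establish bigness of $D$ in two steps: first upgrade the hypothesis to bigness of $\lceil D\rceil$ by using that $-K_X$ is big on any projective toric variety, and then descend from bigness of $\lceil D\rceil$ to bigness of $D$ by exploiting that the difference $\lceil D\rceil - D$ has all coefficients in $[0,1)$ and is supported on $\mathrm{Supp}(D)$.

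For the first step I will invoke the standard fact that $-K_X = \sum_\rho D_\rho$ is big on every projective toric variety: the associated polytope $\{m \in M_{\mathbb{R}} : \langle m, u_\rho\rangle \ge -1 \text{ for all } \rho\}$ contains the origin in its interior (since $\langle 0,u_\rho\rangle = 0 > -1$) and is bounded by completeness of the fan, hence is full-dimensional, so $h^0(X,-mK_X)$ grows like $m^{\dim X}$. Writing
\[
\lceil D\rceil = (K_X + \lceil D\rceil) + (-K_X),
\]
the hypothesis expresses $\lceil D\rceil$ as the sum of a nef and a big $\mathbb{Q}$-divisor. By Kodaira's lemma $-K_X = A + N$ with $A$ ample and $N$ effective, so $\lceil D\rceil = \bigl((K_X+\lceil D\rceil)+A\bigr) + N$ is the sum of an ample (nef $+$ ample $=$ ample) and an effective $\mathbb{Q}$-divisor, hence big.

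For the second step, I write $D = \sum_i a_i P_i$ with $a_i \in \mathbb{Q}_{>0}$ and $P_i$ distinct prime divisors, so $E := \lceil D\rceil - D = \sum_i(\lceil a_i\rceil - a_i)P_i$. Since $\lceil a_i\rceil - a_i = 0$ whenever $a_i \in \mathbb{Z}$, one has $\mathrm{Supp}(E) \subseteq \mathrm{Supp}(D)$; moreover the finitely many nonzero ratios $(\lceil a_i\rceil-a_i)/a_i$ are positive rationals, so
\[
C := \max_{i\,:\,a_i\notin\mathbb{Z}}\frac{\lceil a_i\rceil-a_i}{a_i}
\]
(with the convention $C := 0$ if $E = 0$) is finite, and $E \le C\cdot D$ as $\mathbb{Q}$-divisors. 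Hence
\[
(1+C)D = \lceil D\rceil + \bigl((1+C)D - \lceil D\rceil\bigr) = \lceil D\rceil + (CD - E)
\]
is the sum of a big divisor and an effective one, therefore big. Rescaling by $1/(1+C)$ shows that $D$ itself is big.

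The one delicate point is the observation $\mathrm{Supp}(E)\subseteq\mathrm{Supp}(D)$, which is what makes the ratio $C$ finite and rescues the otherwise false implication ``$D$ nef effective, $\lceil D\rceil$ big $\Rightarrow$ $D$ big'' (e.g.\ $D=0$ would violate this but is excluded here, since $-K_X$ big forces $K_X$ not to be nef on a projective toric variety). The toric geometry enters only through the bigness of $-K_X$; everything else is purely formal.
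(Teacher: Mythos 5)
Your proof is correct and follows essentially the same route as the paper: both deduce bigness of $\lceil D\rceil = (K_X+\lceil D\rceil) + (-K_X)$ from nef-plus-big using that $-K_X$ is big on a projective toric variety, and then descend to bigness of $D$ by finding $\epsilon > 0$ with $\epsilon\lceil D\rceil \le D$ (your constant $1/(1+C)$ is exactly such an $\epsilon$). You merely spell out two things the paper cites or leaves implicit: the polytope argument for bigness of $-K_X$, and the Kodaira-lemma decomposition behind the nef-plus-big implication.
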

\begin{proof}
We know that $-K_X$ is big as $X$ is a projective toric variety (see \cite[Example 11.4.26]{CLS11}). So $K_X+\lceil D\rceil$ being nef implies that $\lceil D\rceil$ is big. Since we can choose an $\epsilon>0$ such that $\epsilon\lceil D\rceil\le D$ we see that $D$ is also big.
\end{proof}
\begin{proof}[Proof of Theorem \ref{mainvar}]
If $D=\lceil D\rceil$ this is just \cite[Corollary 1.7]{Fujino07}.

Next we prove the theorem under the assumption that  $K_X+\lceil D\rceil$ is nef. By Lemma \ref{Dbig} we know that $D$ is big, in particular $\kappa(X,D)=2$. Then the theorem follows from Theorem \ref{main}. 

Now we run a $(K_X+\lceil D\rceil)$-MMP for $X$. By Theorem \ref{toricMMP} and Lemma \ref{cohomologypreservedbirl} as in the proof of Theorem \ref{main}, we can assume that one of the following holds.
\begin{enumerate}[label=(\alph*),leftmargin=3\parindent]
\item $K_X+\lceil D\rceil$ is nef.
\item $\rho(X)=2$ and there exists a $(K_X+\lceil D\rceil)$-negative Mori fiber space $g:X\to Z$ onto a smooth projective curve $Z$. 
\item $\rho(X)=1$ and $-(K_X+\lceil D\rceil)$ is ample.
\end{enumerate}
(a) is done in the above paragraph. In (c) since $\rho(X)=1$, $D$ is either $0$ or ample, so this is just Theorem \ref{main}. Therefore we can assume that we are in the second case. 

If every curve on $X$ is nef, then $\lceil D\rceil$ is also nef and by \cite[Corollary 1.7]{Fujino07} we are done. Thus we may assume that there exists a curve $E$ on $X$ such that $E^2<0$, and every curve different from $E$ is nef. Therefore, after possibly increasing the coefficients of nef components of $D$ we may assume that $\lceil D\rceil=D+\delta E$ for some $\delta\in [0,1)$. 
Now if $\delta=0$ then $\lceil D\rceil=D$ and we are done. If $\delta\ne 0$ then we can write $D$ as $D=D'+\delta'E$ where $D'$ and $E$ have no component in common, $\delta'\ne 0$ and $D'\ne 0$ (if $D'=0$ then $D$ is not nef). But then $D$ must be big as well and by Theorem \ref{main} we are done.
\end{proof}
\bibliographystyle{alpha}
\bibliography{P}  
\end{document}